\documentclass[12pt]{article}
\RequirePackage[colorlinks,citecolor=blue,urlcolor=blue,linkcolor=blue]{hyperref}
\hypersetup{
colorlinks = true,
citecolor=blue,
urlcolor=blue,
linkcolor=blue,
pdfauthor = {Alexey Kuznetsov},
pdfkeywords = {Dirichlet L-function, Siegel function, Gaussian quadrature, high-precision algorithm},
pdftitle = {Simple and accurate approximations to Dirichlet L-functions},
pdfpagemode = UseNone
}
\usepackage[titletoc,title]{appendix}
\usepackage{graphicx,xspace,colortbl}
\usepackage{epstopdf}
\usepackage{amsmath,amsthm,amsfonts,amssymb}
\usepackage{color}
\usepackage{enumerate}
\usepackage{fancybox}
\usepackage{epsfig}
\usepackage{subfig}
\usepackage{pdfsync}
\usepackage{comment}
    \def\qed{\hfill$\sqcap\kern-8.0pt\hbox{$\sqcup$}$\\}
    \def\re{\textnormal {Re}}

    \def\r{{\mathbb R}}
    \def\c{{\mathbb C}}
    \def\d{{\textnormal d}}
    \def\i{{\textnormal i}}

	\newtheorem{proposition}{Proposition}
	
	\theoremstyle{definition}

\title{Simple and accurate approximations to Dirichlet $L$-functions}
\author{ 
{Alexey Kuznetsov\footnote{Dept. of Mathematics and Statistics,  York University,
4700 Keele Street, Toronto, ON, M3J 1P3, Canada.  
  Email: akuznets@yorku.ca}} }

\date{\today}

\begin{document}
\maketitle

\begin{abstract}
We extend the quadrature method developed in \cite{Kuznetsov_2026} to Dirichlet $L$-functions associated with
primitive characters. The approximations combine finite Dirichlet sums with correction terms 
whose quadrature coefficients can be precomputed for each character. 
Numerical experiments demonstrate high accuracy in the critical strip and in other vertical strips.
We also describe how to combine these  approximations with direct summation and Euler–Maclaurin
summation to evaluate $L(s,\chi)$ throughout the complex plane.
\end{abstract}
{\vskip 0.15cm}
 \noindent {\it Keywords}: Dirichlet $L$-function, Siegel function, Gaussian quadrature, high-precision algorithm
{\vskip 0.25cm}
 \noindent {\it 2020 Mathematics Subject Classification}: Primary 11M06, Secondary 11Y35

\section{Introduction and main results}\label{section:Intro}

Throughout the paper, $q \ge 3$ is an integer, $\chi$ is a  primitive Dirichlet character modulo $q$, and $s=\sigma+\i t$. The Dirichlet $L$-function $L(s,\chi)$ is an entire function, defined via 
\[
L(s,\chi):=\sum\limits_{n\ge 1} \frac{\chi(n)}{n^s}
\]
in the half-plane $\sigma>0$ and via a functional equation
 \cite[Eq. (25.15.5)]{NIST}
\begin{equation}\label{reflection_formula}
L(s,\chi)=X_{\chi}(s) L(1-s,\bar \chi)
\end{equation}
in $\sigma<1$. Here
\begin{equation}\label{def_X_chi}
X_{\chi}(s):=\frac{\tau(\chi)}{i^{\kappa} \pi} 
\Big( \frac{2\pi}{q} \Big)^s \Gamma(1-s) \sin\Big(\frac{\pi}{2} (s+\kappa)\Big),
\end{equation}
where $\kappa:=(1-\chi(-1))/2 \in \{0,1\}$ and  
\[
\tau(\chi):=\sum\limits_{n=1}^q \chi(n) e^{2\pi \i n/q}.
\]

Our goal is to construct approximations to $L(s,\chi)$ in fixed vertical strips $a\le\sigma\le b$ for large positive $t$. For fixed $q$ and a fixed number of quadrature nodes, their evaluation requires $O(t^{1/2})$ numerical operations, excluding precomputation. Their accuracy is assessed numerically; we do not establish rigorous bounds for the quadrature error in this paper.

Methods for computing Dirichlet $L$-functions are closely related to those for the Riemann zeta function \cite{Rubinstein_2005}. For fixed modulus and working precision, standard Euler--Maclaurin methods \cite{BERNDT1975,Johansson_2015} use $O(|s|)$ terms, whereas Riemann--Siegel methods \cite{Davies_1965,Deuring_1967} use $O(|t|^{1/2})$ terms. For fixed $q$, Hiary's algorithm \cite[Theorem 1.1]{Hiary_2014} gives a complexity of $O_q((1+|t|)^{1/3+o(1)})$ in the critical strip.

Our construction extends the quadrature algorithm for the Riemann zeta function in \cite{Kuznetsov_2026}, which was motivated by Galway's method \cite{Galway_2001}. Tyagi \cite{Tyagi_2022} developed a related approach to high-precision evaluation of Dirichlet $L$-functions using double-exponential quadrature.

Let $p\ge 2$ be an integer. Our approximations require $2pq$ precomputed complex coefficients
\[
\big \{\omega^{(i)}_{\chi,p,j}, \; \lambda^{(i)}_{\chi,p,j} \; : \; 0\le i < q, \; 1\le j \le p \big \}.
\]
Section \ref{section2} describes how these coefficients are computed. For $t>0$, define
\begin{equation}\label{def_N_r_k_M}
N:=\Big\lfloor  \sqrt{\frac{qt}{2\pi}} \Big\rfloor, \;\;\; M:=N+1/2,\;\;\;
r:=N \; {\textnormal{mod}} \; q, \;\;\; k:=(N-r)/q.
\end{equation}
For a complex-valued function $f$, write $\bar f(s):=\overline{f(\bar s)}$. We use principal branches of logarithms and powers throughout. Our approximation to $L(s,\chi)$ is
 \begin{equation}\label{L(s,chi)_approximation}
 L_p(s,\chi):=f_{\chi,p}(s) + X_{\chi}(s) \bar f_{\chi,p}(1- s), 
 \end{equation}
 where 
  \begin{equation}\label{f_chi_approximation}
 f_{\chi,p}(s):=\sum\limits_{n=1}^N \frac{\chi(n)}{n^s} -(-1)^k 
M^{-s} \sum\limits_{j=1}^p \omega^{(r)}_{\chi,p,j} 
e^{-\frac{2 \pi}{q} M \lambda^{(r)}_{\chi,p,j}}
\bigg(1+\i \frac{\lambda^{(r)}_{\chi,p,j}}{M}  \bigg)^{-s}.
 \end{equation} 
In \eqref{L(s,chi)_approximation}, $\bar f_{\chi,p}(1-s)=\overline{f_{\chi,p}(1-\bar s)}$. Since $s$ and $1-\bar s$ have the same imaginary part, both terms in \eqref{L(s,chi)_approximation} use the same $N$, $M$, $r$, and $k$.

Following \cite{Kuznetsov_2026}, we measure the error on a finite horizontal grid by computing
\[
\Delta_{\chi,p}(t;a,b):=\max\limits_{0\le j \le 100}
\big | L_p(\sigma_j+\i t,\chi)-L(\sigma_j+\i t,\chi) \big|, \;\;\;
{\textnormal{where}} \; \;\; \sigma_j=a+(b-a)j/100. 
\]
Figure \ref{fig1} plots $\Delta_{\chi,p}(t;1/2,5)$ on a logarithmic scale for the character $\chi_{5,2}$ in the Conrey labeling used by the LMFDB \cite{lmfdb}.   In the experiments shown, the error decreases rapidly as $p$ increases. Experiments with other primitive characters exhibited similar behavior.

\begin{figure}[t!]
\centering
\includegraphics[width =16.5cm]{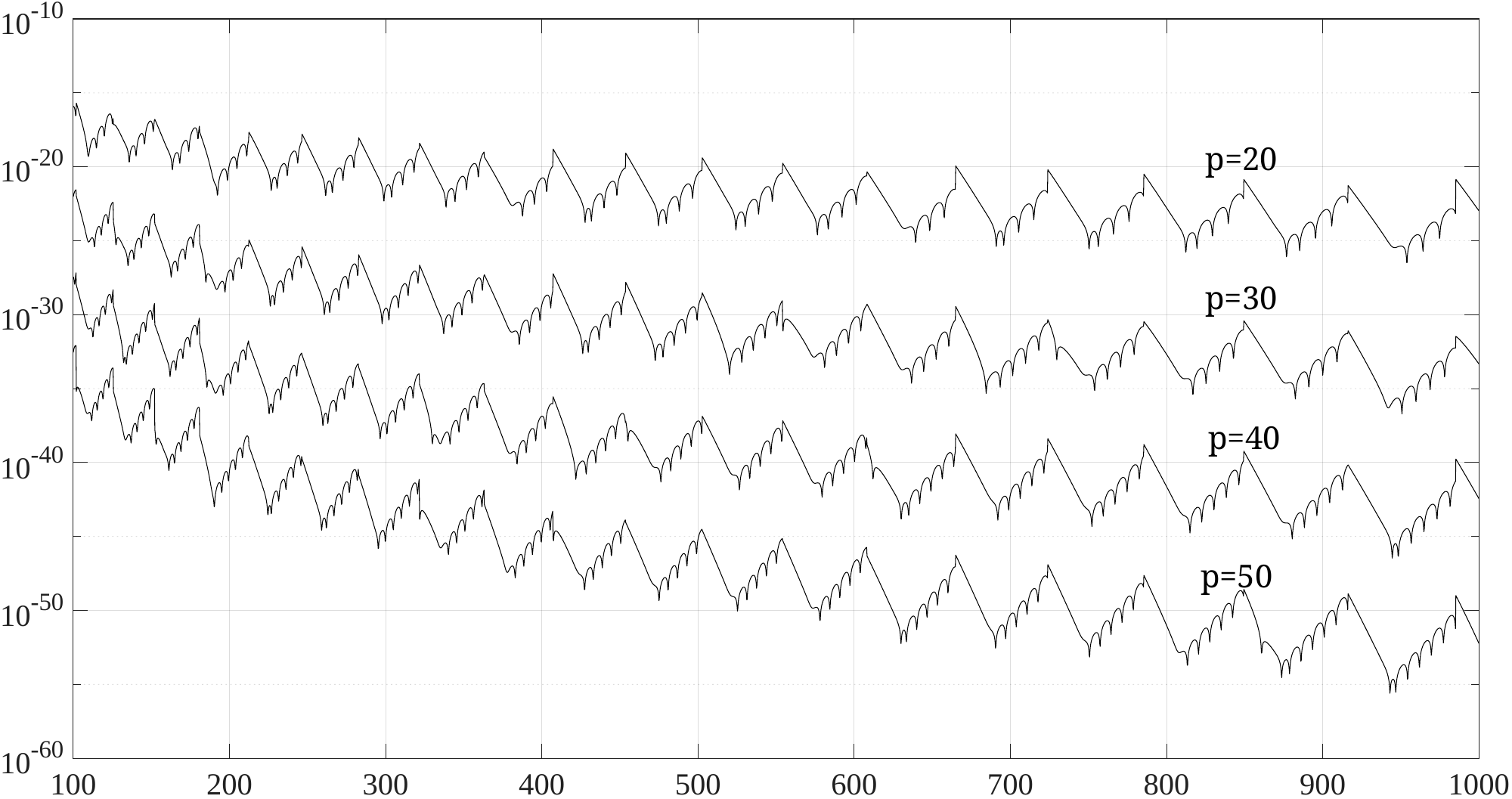} 
\caption{The errors $\Delta_{\chi,p}(t;1/2,5)$ for $\chi=\chi_{5,2}$ and $p=20,30,40,50$. } 
\label{fig1}
\end{figure}

\vspace{0.25cm}
\noindent
{\bf Remark 1:}  The parameter $p$ counts all quadrature nodes in the present paper. In \cite{Kuznetsov_2026}, it counts only the nodes in the fourth quadrant: the complete quadrature rule has $2p+1$ nodes, consisting of those nodes, their negatives, and the origin. If the construction is formally extended to the principal character $\chi_0$ modulo $1$, then the corresponding weights here are one half of those in \cite{Kuznetsov_2026}, while the nodes and interpolation points agree. With this normalization, $\zeta_p(s)=L_{2p+1}(s,\chi_0)$ (though this formal comparison lies outside the standing assumption $q\ge3$).

\vspace{0.25cm}
 We precomputed coefficients for all primitive Dirichlet characters with $3\le q\le10$ and for $p\in\{20,30,40,50,70,100\}$, using Fortran 90 and David Bailey's MPFUN2020 arbitrary-precision package \cite{Bailey_2020}: these coefficients can be downloaded from \href{https://github.com/Alexey-Kuznetsov-math/Dirichlet-L-functions}{GitHub} or from the author's \href{https://kuznetsovmath.ca/}{webpage}. Implementation details and numerical validation are discussed in Section \ref{section2}. For these computed coefficient sets, we observed the following symmetries (the statements below are observations for those sets, rather than unconditional assertions for every $p$ and $\chi$):
\begin{itemize}
\item[(i)] For $0\le r<q$ and $1\le j \le p$ we have 
\begin{equation}\label{lambda_omega_symmetry1}
\lambda^{(q-1-r)}_{\chi,p,j}=-\lambda^{(r)}_{\chi,p,p+1-j},
\;\;\; 
\omega^{(q-1-r)}_{\chi,p,j}=\chi(-1)\omega^{(r)}_{\chi,p,p+1-j}.
\end{equation}
\item[(ii)] When $4|q$ we have for all $0\le r <q/2$ and $1\le j\le p$
\begin{equation}\label{lambda_omega_symmetry2}
\lambda^{(r+q/2)}_{\chi,p,j}=\lambda^{(r)}_{\chi,p,j}, \;\;\;
\omega^{(r+q/2)}_{\chi,p,j}=\i (-1)^r \omega^{(r)}_{\chi,p,j}. 
\end{equation}
\end{itemize}

 Proposition \ref{prop_symmetry} in Section \ref{section2} proves the corresponding kernel symmetries. For the even values of $p$ considered here, coefficient sets satisfying \eqref{lambda_omega_symmetry1} can be stored using $pq$ complex numbers; when $4\mid q$, using \eqref{lambda_omega_symmetry2} as well reduces this count to $pq/2$. These counts include the symmetry within the residue class $r=(q-1)/2$ when $q$ is odd. Since storage still grows with $q$, our method is particularly well suited to repeated evaluations for a fixed character of moderate modulus.

This paper is organized as follows. Section \ref{section2} derives the approximations, evaluates the required moments, describes the construction of the coefficients, and proves the kernel symmetries. Section \ref{Section_computing_L} combines the approximations with direct  summation and Euler--Maclaurin summation to construct and algorithm for computing $L(s,\chi)$ in the entire complex plane and discusses implementation and validation.

\section{Deriving the approximations and computing 
$\omega^{(r)}_{\chi,p,j}$ and $\lambda^{(r)}_{\chi,p,j}$ }\label{section2}

The construction follows the approach in \cite[Section 2]{Kuznetsov_2026}. Set $\theta:=e^{-\pi\i/4}$ and assume $t>0$. We begin with Siegel's representation \cite{Siegel_1943} (see also \cite[Section 4]{Tyagi_2022}):
\begin{equation*}
L(s,\chi)=f_{\chi}(s) + X_{\chi}(s) \bar f_{\chi}(1- s), \;\;\; s\in \c,
\end{equation*}
where $X_\chi$ is defined in \eqref{def_X_chi} and the entire function $f_\chi$ is given by
\begin{equation}\label{eqn1}
f_{\chi}(s):=\frac{1}{2\pi \i} \int_{0 \swarrow 1} 
e^{\frac{\pi \i}{q} u^2} W_{\chi}(u) u^{-s} \d u, 
\end{equation}
and 
\begin{equation}\label{def_W_chi}
W_{\chi}(u):=\frac{\pi}{2q} \sum\limits_{n=1}^{2q} \chi(n) e^{-\frac{\pi \i n^2}{q}} \cot\big(\tfrac{\pi}{2q} (u-n)\big). 
\end{equation}
The contour of integration in \eqref{eqn1} is a straight line oriented in the southwest direction  and crossing the real axis between $0$ and $1$. We shift the contour parallel to itself so that it crosses the real axis at $M$, as defined in \eqref{def_N_r_k_M}. The integer points crossed are $u=1,\ldots,N$, with residues
\[
{\textnormal{Res}}_{\,u=n\, }\Big( e^{\frac{\pi \i}{q} u^2} W_{\chi}(u) u^{-s} \Big)=\frac{\chi(n)}{n^s}. 
\]
The residue is zero when $\chi(n)=0$, in which case there is no pole. Using $W_\chi(u+q)=(-1)^qW_\chi(u)$ and substituting $u=M+\sqrt q\,x/\theta$ after the contour shift gives 
\begin{equation}\label{eqn_f_chi_main}
f_{\chi}(s)=\sum\limits_{n=1}^N \frac{\chi(n)}{n^s} -
 (-1)^k M^{-s}  \int_{\r} 
e^{-\pi x^2 -2 \pi M \theta x/\sqrt{q}}
\Big(1+\frac{\sqrt{q}x}{\theta M}  \Big)^{-s} K_{\chi,r} (x) \d x,
\end{equation}
where we denoted
\begin{equation}\label{def_K_chi_r}
K_{\chi,r}(x):=\frac{\theta\sqrt{q}}{2\pi} 
e^{\frac{\pi \i}{q}(r+1/2)^2} 
W_{\chi}\Big(r+\frac{1}{2} + \frac{\sqrt{q}x}{\theta} \Big).
\end{equation}

Define 
\begin{equation}\label{def_I_chi_r}
{\mathcal I}_{\chi,r}(s,M):=\int_{\r} 
e^{-\pi x^2 -2 \pi M \theta x/\sqrt{q}}
\Big(1+\frac{\sqrt{q}x}{\theta M}  \Big)^{-s} K_{\chi,r} (x) \d x.
\end{equation}
The above formulas reduce evaluation of $L(s,\chi)$ to evaluation of $\mathcal I_{\chi,r}(s,M)$. Now we fix $\chi$, $p$, and a residue class $r\in\{0,\ldots,q-1\}$; $M$ and $N$ continue to be chosen from $s$ by \eqref{def_N_r_k_M}. Following \cite[Eq.\ (10)]{Kuznetsov_2026}, write
\begin{equation*}
{\mathcal I}_{\chi,r}(s,M)=\int_{\r} 
e^{g(x;s)} \eta(\d x),
\end{equation*}
where
\[
g(x)=g(x;s):=-2 \pi M \theta x/\sqrt{q}- s 
\ln\Big(1+\frac{\sqrt{q}x}{M \theta }\Big) \;\;\;
\textnormal{and} \;\;\; \eta(\d x):=e^{-\pi x^2} K_{\chi,r} (x) \d x.
\]

We seek weights $\omega_j=\omega^{(r)}_{\chi,p,j}$ and nodes $x_j=\lambda^{(r)}_{\chi,p,j}/(\theta\sqrt q)$ for the quadrature rule
\begin{equation}\label{eqn_quadrature_integral}
\int_{\r} f(x) \eta(\d x) \approx \sum\limits_{j=1}^p \omega_j f(x_j). 
\end{equation}
The above equation should be compared with \cite[Eq. (11)]{Kuznetsov_2026}. The coefficients are determined by requiring this rule to be exact for the following $2p$ test functions:
\[
f_k(x)=e^{-\pi x^2 - 2\pi \theta x y_{p,k}/\sqrt{q}},
\]
where the interpolation points are equally spaced in $[-1,1]$:
\[
y_{p,k}:=-1+\frac{2k}{2p-1}, \;\;\;  k=0,\ldots,2p-1.
\]
To motivate this choice, rewrite $g$ exactly as
\begin{equation}\label{def_I_chi_r2}
g(x;s)=-\pi x^2+ 
 A \Big(\frac{x^2}{2M^2}-
\frac{\theta x}{M \sqrt{q}} \Big) - s {\mathcal L}_2
\Big(\frac{\sqrt{q} x}{\theta M}\Big),
\end{equation}
where  $A=A(s):=2\pi M^2 + \i q s$ and
\[
{\mathcal L}_2(z):=\ln(1+z)-z+\frac{z^2}{2}=\sum\limits_{n\ge 3} (-1)^{n+1} \frac{ z^n}{n},\;\;\; |z|<1. 
\]
Set
\[
B=B(s):=2 \Big( M - \sqrt{\frac{qt}{2\pi}} \Big). 
\]
Equation \eqref{def_N_r_k_M} gives $-1<B\le1$. Since $qt=2\pi(M-B/2)^2$, we have, for fixed $q$ and bounded $\sigma$,
\begin{equation}\label{A_asymptotics}
A=2 \pi M B + O(1). 
\end{equation}
Because $\mathcal L_2(z)=O(z^3)$ as $z\to0$, the last term in \eqref{def_I_chi_r2} is $O(t^{-1/2})$ for each fixed $x$. Also, \eqref{A_asymptotics} gives
\[
A\Big(\frac{x^2}{2M^2}-
\frac{\theta x}{M \sqrt{q}} \Big) = - \frac{2 \pi \theta x B}{\sqrt{q}}+O(t^{-1/2}),  
\]
as $t\to +\infty$. Consequently, uniformly for $x$ on compact subsets of $\c$,
\[
g(x;s)= -\pi x^2 - \frac{2\pi \theta x B}{\sqrt{q}} + O(t^{-1/2}). 
\]
This local asymptotic calculation suggests approximating $\exp(g(x;s))$ by the family $\exp(-\pi x^2-2\pi\theta xy/\sqrt q)$, $-1\le y\le1$, and motivates the chosen test functions. 

Thus, we arrive at the following problem: we seek $2p$ complex numbers $\{\omega_j\}_{1\le j \le p}$ and $\{x_j\}_{1\le j \le p}$ satisfying
\begin{equation}\label{quadrature_condition0}
\int_{\r} 
e^{-2\pi x^2 -2 \pi y_{p,k} \theta x/\sqrt{q}}
 K_{\chi,r} (x) \d x= \sum\limits_{j=1}^p
 \omega_j e^{-\pi x_j^2-2 \pi y_{p,k} \theta x_j/\sqrt{q}}
\end{equation}
for $k=0,1,\dots,2p-1$. 
The above quadrature condition should be compared with  \cite[Eq. (16)]{Kuznetsov_2026}. We note that $\omega_j$ and $x_j$ also depend on $\chi$, $p$ and $r$, but we suppress this dependence to simplify the notation. 

The integral on the left-hand side of \eqref{quadrature_condition0} can be evaluated explicitly: it can be reduced to a finite linear combination of Mordell integrals \cite{Mordell}. It can also be evaluated directly in the same way as the integrals in \cite[Eq. (5)]{Deuring_1967} or \cite[Section 3]{Siegel_1943}. Define
\begin{equation}\label{def_H_chi_r}
H_{\chi,r}(y):=
\int_{\r} 
e^{-2\pi x^2 -2 \pi y \theta x/\sqrt{q}}
 K_{\chi,r} (x) \d x.
\end{equation} Set $b:=r+1/2$, and let $C_q(u):=\cos(\pi u)$ for odd $q$ and $C_q(u):=\i\sin(\pi u)$ for even $q$. For integers $j$ with $|j|\le q/2$, define
\[
c_j:=\frac{\sqrt{2/q}}{\theta} e^{\frac{\pi \i}{q} (b^2+(q+2j)b)} \sum\limits_{n=1}^q 
\chi(n) e^{-\frac{\pi \i}{q} (n^2+(q+2j)n)}.
\]
Set
\begin{align}
\label{def_P}
P(x)&:=e^{-\frac{\pi \i q}{8}} \sideset{}{'}\sum\limits_{|j|\le q/2} c_j e^{-\frac{\pi \i}{2q}(x+j)^2 }
C_q((x-j)/2), \\
\nonumber
Q(x)&:=\sum\limits_{n=1}^q \chi(r+n) \exp
\Big[ \frac{\pi \i}{q} \Big( (n-1/2)^2-2b(n-1/2) +2x(n-(q+1)/2) \Big) \Big].
\end{align}
The prime in the sum notation in \eqref{def_P} indicates that the two endpoint terms, $j=\pm q/2$, have weight $1/2$ when $q$ is even. Then
\begin{equation}\label{eqn_Mordell}
H_{\chi,r}(y)=\frac{(-1)^q Q(y)-P(y)}{2C_q(y)}.
\end{equation}
Equation \eqref{eqn_Mordell} holds wherever $C_q(y)\ne0$ and extends to all $y\in\c$ by continuity. The integral \eqref{def_H_chi_r} shows that $H_{\chi,r}$ is entire. Since the zeros of $C_q$ are simple, the value of $H_{\chi,r}$ at such a zero $y_0$ is
\[
H_{\chi,r}(y_0)=\frac{(-1)^qQ'(y_0)-P'(y_0)}{2C_q'(y_0)}.
\]

In order to solve the quadrature problem \eqref{quadrature_condition0}, we follow the method in \cite[Section 2]{Kuznetsov_2026}. 
We denote $\mu_{p,k}=H_{\chi,r}(y_{p,k})$ 
and introduce 
\begin{equation}\label{def_uj_zj}
u_j=\omega_{j}  e^{-\pi x_j^2 + 2 \pi \theta x_j/\sqrt{q}}, \;\;\;
z_j=e^{-4 \pi \theta \frac{x_j}{(2p-1) \sqrt{q}}}. 
\end{equation}
The quadrature conditions \eqref{quadrature_condition0} then become
\begin{equation}\label{quadrature_condition3}
\sum\limits_{j=1}^p u_j z_j^k = \mu_{p,k}, \;\;\; k=0,1,\dots,2p-1. 
\end{equation}
This is a finite moment problem. Its solution can be constructed by the algebraic methods underlying Gaussian quadrature for complex functionals \cite{GONZALEZV1995,Laurie_2001,MILOVANOVIC2005,Pozza2016}. The usual positive-measure existence theorem does not apply here. A method for solving \eqref{quadrature_condition3} using orthogonal polynomials and a three-term recurrence relation is described in detail in \cite{Kuznetsov_2026}.  For completeness, we present an alternative construction.  Suppose that the Hankel matrix
\[
\mathsf H_p=(\mu_{p,j+k})_{j,k=0}^{p-1}
\]
is nonsingular. Find $h_0,\ldots,h_{p-1}$ by solving the system of linear equations 
\begin{equation*}
\sum_{\ell=0}^{p-1}h_\ell\mu_{p,k+\ell}=-\mu_{p,k+p},
\qquad k=0,\ldots,p-1.
\end{equation*}
If $R_p(z):=z^p+\sum_{\ell=0}^{p-1}h_\ell z^\ell$ has $p$ distinct nonzero roots, take these roots as the nodes $z_j$ and determine $u_j$ from the first $p$ equations in \eqref{quadrature_condition3}. The recurrence encoded by $R_p$ then gives the remaining equations. Under these hypotheses, the solution $u_j$, $z_j$ to \eqref{quadrature_condition3} is unique up to permutation.

Once a solution of \eqref{quadrature_condition3} has been obtained, we order the pairs $(u_j,z_j)$ so that  the moduli $|z_j|$ are non-decreasing.  We recover the quadrature coefficients $\omega_j$ and $x_j$ from \eqref{def_uj_zj}: 
\[
x_j=-\frac{(2p-1)\sqrt{q}}{4\pi \theta} \ln(z_j), \;\;\;
\omega_j=u_j e^{\pi x_j^2 - 2\pi \theta x_j/\sqrt{q}}, \;\;\;
j=1,2,\dots,p. 
\]
With the coefficients $\{\omega_j\}_{1\le j\le p}$ and $\{x_j\}_{1\le j \le p}$ computed, we now use \eqref{eqn_quadrature_integral} and approximate 
\[
{\mathcal I}_{\chi,r}(s,M)=\int_{\r} e^{g(x;s)} \eta(\d x) 
\approx 
\sum\limits_{j=1}^p \omega_j e^{g(x_j;s)}=
\sum\limits_{j=1}^p \omega_j e^{-2\pi M \theta x_j/\sqrt{q}}
\Big(1+\frac{\sqrt{q}x_j}{\theta M}  \Big)^{-s}.
\]
Finally, set $\omega^{(r)}_{\chi,p,j}=\omega_j$ and $\lambda^{(r)}_{\chi,p,j}=\theta \sqrt{q} x_j$. The above formula, combined with  \eqref{eqn_f_chi_main} and \eqref{def_I_chi_r}, gives us the desired approximation
\eqref{f_chi_approximation} to $f_{\chi}(s)$.

A useful consistency check is to verify the original $2p$ interpolation conditions in terms of the stored coefficients: for $k=0,\ldots,2p-1$
 \begin{equation*}
 \sum\limits_{j=1}^p \omega^{(r)}_{\chi,p,j} e^{-\frac{\pi \i}{q} (\lambda^{(r)}_{\chi,p,j})^2-\frac{2 \pi}{q} y_{p,k} \lambda^{(r)}_{\chi,p,j}}=H_{\chi,r}(y_{p,k}).
 \end{equation*}

The observed coefficient symmetries \eqref{lambda_omega_symmetry1} and \eqref{lambda_omega_symmetry2} correspond to the kernel identities below. To transfer reflection symmetry from kernels to coefficients $\omega^{(r)}_{\chi,p,j}$ and $\lambda^{(r)}_{\chi,p,j}$, one would need nondegeneracy of the moment problem. We retain \eqref{lambda_omega_symmetry1} and \eqref{lambda_omega_symmetry2} as observations for the reported coefficient tables.
 
 \begin{proposition}\label{prop_symmetry}
${}$ \begin{itemize}
\item[(i)] For all $0\le r<q$
\begin{equation}\label{K_symmetry_1}
K_{\chi,q-1-r}(-x)=\chi(-1) K_{\chi,r}(x), \;\;\; x\in \r. 
\end{equation}
\item[(ii)]
 When $4|q$, we have for all $0\le r < q/2$
\begin{equation}\label{K_symmetry_2}
K_{\chi,r+q/2}(x)=\i (-1)^r  K_{\chi,r}(x), \;\;\; x\in \r. 
\end{equation}
\end{itemize}
\end{proposition}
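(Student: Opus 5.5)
The plan is to prove both identities directly from the definitions \eqref{def_K_chi_r} and \eqref{def_W_chi}. Write $b=r+1/2$ and $u=b+\sqrt q\,x/\theta$, so that $K_{\chi,r}(x)=\frac{\theta\sqrt q}{2\pi}e^{\pi\i b^2/q}W_\chi(u)$. In each part I will find how the argument of $W_\chi$ and the Gaussian prefactor transform under the change of residue class. I will then prove a matching functional identity for $W_\chi$ by reindexing the finite sum in \eqref{def_W_chi}. Both sides are then entire in $x$, so the identities hold for all $x$, in particular for $x\in\r$.

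For (i), replacing $r$ by $q-1-r$ and $x$ by $-x$ turns the argument into $q-1-r+\tfrac12-\sqrt q\,x/\theta=q-u$. The prefactor becomes $e^{\pi\i(q-b)^2/q}=e^{\pi\i q}e^{-2\pi\i b}e^{\pi\i b^2/q}=(-1)^{q+1}e^{\pi\i b^2/q}$, since $e^{-2\pi\i(r+1/2)}=-1$. It therefore suffices to show $W_\chi(q-u)=(-1)^{q+1}\chi(-1)W_\chi(u)$. By the periodicity $W_\chi(u+q)=(-1)^qW_\chi(u)$ already used in the paper, this reduces to the reflection identity $W_\chi(-u)=-\chi(-1)W_\chi(u)$. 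To prove it, I use that $\cot$ is odd, so $\cot(\frac{\pi}{2q}(-u-n))=-\cot(\frac{\pi}{2q}(u+n))$. I then substitute $n\mapsto 2q-n$; each summand depends on $n$ only modulo $2q$, because $\cot$ has period $\pi$. Under this substitution $\chi(2q-n)=\chi(-1)\chi(n)$, and $e^{-\pi\i(2q-n)^2/q}=e^{-\pi\i n^2/q}$ because $(4q^2-4qn)/q$ is an even integer. The signs then combine to $\chi(-1)$, as required.

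For (ii), with $4\mid q$, replacing $r$ by $r+q/2$ shifts the argument to $u+q/2$. The prefactor acquires the factor $e^{\pi\i(b+q/4)}=\i(-1)^r e^{\pi\i q/4}=\i(-1)^r(-1)^{q/4}$. It therefore suffices to show $W_\chi(u+q/2)=(-1)^{q/4}W_\chi(u)$. Reindexing $n=m+q/2$ in \eqref{def_W_chi} gives summands $\chi(m+q/2)e^{-\pi\i m^2/q}(-1)^m e^{-\pi\i q/4}\cot(\frac{\pi}{2q}(u-m))$, with $e^{-\pi\i q/4}=(-1)^{q/4}$. The whole claim thus reduces to the arithmetic identity $\chi(m+q/2)(-1)^m=\chi(m)$ for all $m$.

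This arithmetic identity is the only genuinely non-formal step and the place where primitivity enters. For even $m$ both sides vanish, since $q$ is even and $m+q/2$ is even. For odd $m$, we have $m\cdot q/2\equiv q/2 \pmod q$, so $m+q/2\equiv m(1+q/2)\pmod q$, and it remains to show $\chi(1+q/2)=-1$. Because $4\mid q$, $(1+q/2)^2=1+q+q^2/4\equiv1\pmod q$, and $\{1,1+q/2\}$ is exactly the kernel of reduction $(\mathbb Z/q)^*\to(\mathbb Z/(q/2))^*$. Hence $\chi(1+q/2)=\pm1$. If it were $+1$, $\chi$ would factor through modulus $q/2$, contradicting primitivity; so $\chi(1+q/2)=-1$, which completes (ii). I expect the remaining bookkeeping of the factors $e^{\pm\pi\i q/4}$ and $(-1)^q$ to be routine.
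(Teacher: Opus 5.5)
Your proof is correct and follows essentially the same route as the paper. For (i) you use the parity and quasiperiodicity of $W_\chi$, and for (ii) you derive $\chi(1+q/2)=-1$ from primitivity and then shift the index by $q/2$; you also make explicit that $(1+q/2)^2\equiv 1 \pmod q$, which the paper's ``otherwise $\chi$ would factor through modulus $q/2$'' leaves implicit.

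One harmless inaccuracy: $K_{\chi,r}$ is not entire, because $W_\chi$ has simple poles at integers coprime to $q$, giving poles at $x=\theta(n-r-\tfrac12)/\sqrt q$ off the real axis. Your argument never uses entireness, though, since the reindexed identities hold pointwise wherever both sides are defined.
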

\begin{proof}
The definition \eqref{def_W_chi} gives the quasiperiodicity and parity identities
\begin{equation*}
W_{\chi}(u+q)=(-1)^q W_{\chi}(u), \;\;\;
W_{\chi}(-u)=-\chi(-1) W_{\chi}(u). 
\end{equation*}
Substituting these identities into \eqref{def_K_chi_r} proves \eqref{K_symmetry_1}. To prove \eqref{K_symmetry_2}, suppose that $4|q$. Then $\chi(1+q/2)=-1$; otherwise, $\chi$ would factor through modulus $q/2$, contradicting its primitivity.  Every $n$ coprime to $q$ is odd, and hence
\[
n+\frac{q}{2} \equiv n \Big(1+\frac{q}{2} \Big) 
\; (\textnormal{mod} \; q). 
\]
It follows that $\chi(n+q/2)=-\chi(n)$ for every integer $n$, including those not coprime to $q$, for which both sides vanish. Shifting the summation index in \eqref{def_W_chi} by $q/2$ then yields
\[
W_{\chi}\Big(u+\frac{q}{2}\Big)=e^{-\frac{\pi \i q}{4}} W_{\chi}(u).
\] 
Combining this identity with \eqref{def_K_chi_r} gives  \eqref{K_symmetry_2}. 
\end{proof}

\section{Computing $L(s,\chi)$ in the entire complex plane}\label{Section_computing_L}

To evaluate $L(s,\chi)$ throughout the complex plane, it suffices to evaluate both $L(s,\chi)$ and $L(s,\bar\chi)$ in the region $\sigma\ge1/2$, $t\ge0$. The functional equation \eqref{reflection_formula} and the conjugation identity
\[
L(\bar s,\chi)=\overline{L(s,\bar\chi)}
\]
then determine the remaining values. In the basic region $\sigma\ge1/2$, $t\ge0$, we select among direct summation, Euler--Maclaurin summation, and our quadrature approximation according to their estimated computation cost. Figure \ref{fig2} illustrates a partition with fixed thresholds; the implementation may instead select the method  according to its estimated computation cost at the given  value of $s$.

\begin{figure}[t!]
\centering
\includegraphics[height =7.0cm]{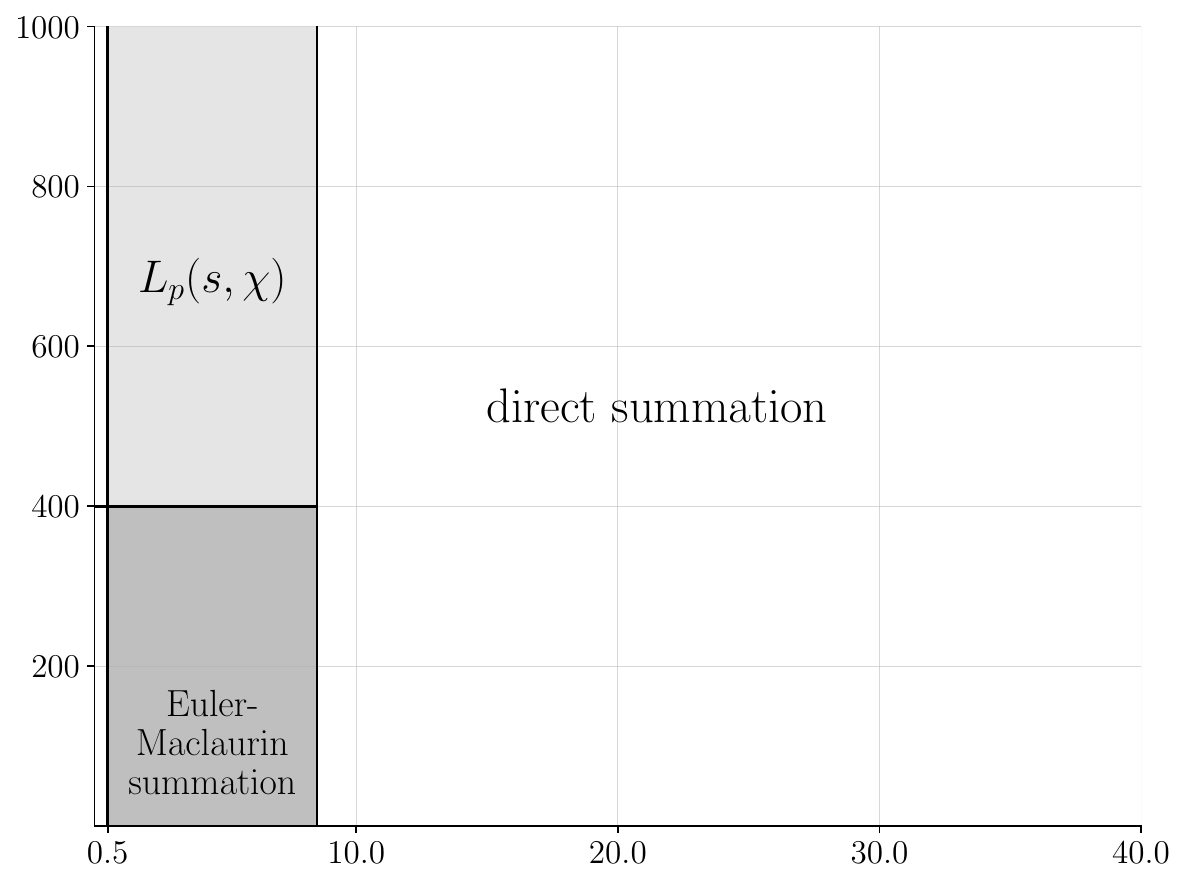}
\caption{Illustrative division of the region $\sigma\ge1/2$, $t\ge0$ among the three methods for evaluating $L(s,\chi)$. The horizontal switch is $C_2=400$; the vertical line represents a choice of $C_1$.} 
\label{fig2}
\end{figure}  

Direct summation is useful when $\sigma\ge C_1>1$. For an integer $N\ge1$, write
\begin{equation*}
L(s,\chi)=\sum\limits_{n=1}^N \frac{\chi(n)}{n^{s}}+
{\mathcal E}_{\chi}(N,s),
\end{equation*}
where 
\[
|{\mathcal E}_{\chi}(N,s)|
\le\sum_{n>N}\frac{|\chi(n)|}{n^{\sigma}}
\le\sum_{n>N}n^{-\sigma}
<\frac{N^{1-\sigma}}{\sigma-1}
\le\frac{N^{1-C_1}}{C_1-1}.
\]

For $1/2\le\sigma<C_1$ and $0\le t\le C_2$, illustrated by the darker shaded region in Figure \ref{fig2}, we use Euler--Maclaurin summation. Let $N$ be a positive multiple of $q$, and set $\delta:=1-\kappa$. Thus $\delta=0$ for odd characters and $\delta=1$ for even characters. We use the rising factorial notation $(s)_0:=1$ and $(s)_n:=s(s+1)\cdots(s+n-1)$ for $n\ge1$. For fixed $s$, $\chi$, and $J\ge1$, as $N\to\infty$ through multiples of $q$,
\begin{equation*}
L(s,\chi)=\sum\limits_{n=1}^N \frac{\chi(n)}{n^s}+
N^{-s} \sum\limits_{j=0}^{J-1} b_j \frac{(s)_{2j+\delta}}{N^{2j+\delta}}+O_{s,\chi,J}(N^{-\re(s)-2J-\delta}).
\end{equation*}
The coefficients $b_j$ are given by
\[
b_j=\frac{(-1)^{\delta+1} q^{2j+\delta}}{(2j+\delta+1)!}
\sum\limits_{\ell=1}^q \chi(\ell) B_{2j+\delta+1}(\ell/q).
\]
Here $B_n(x)$ denotes the Bernoulli polynomial. This expansion follows from the character analogue of Euler--Maclaurin summation \cite[Theorem 4.1]{BERNDT1975}. It can also be derived from the Hurwitz-zeta asymptotic expansion \cite[Eq.\ (25.11.43)]{NIST} and the finite decomposition \cite[Eq.\ (25.15.3)]{NIST}. 

In the implementation, we choose $N=q\lfloor\alpha|s|+\beta\rfloor$, with positive constants $\alpha$ and $\beta$ chosen for each character so that the desired accuracy is attained in the region $1/2 \le \sigma < C_1$ and $0\le t \le C_2$. 

Finally, for $1/2\le\sigma<C_1$ and $t>C_2$, illustrated by the lighter shaded region in Figure \ref{fig2}, we use $L_p(s,\chi)$.

We implemented the combined algorithm for $L(s,\chi)$ and $L'(s,\chi)$ for all primitive characters with $3\le q\le10$. Our approximation to $L'(s,\chi)$ is obtained by differentiating the right-hand side of \eqref{L(s,chi)_approximation} with respect to $s$ while holding $N$ fixed (see the corresponding definition of $\zeta_p^{(1)}(s)$ in \cite{Kuznetsov_2026}).  The coefficients and Fortran modules can be downloaded from \href{https://github.com/Alexey-Kuznetsov-math/Dirichlet-L-functions}{GitHub} or from the author's \href{https://kuznetsovmath.ca/}{webpage}. 

All calculations in these modules use quadruple precision, with $p=40$ and $C_2=400$. For $0\le t\le400$, the implementation selects between Euler--Maclaurin and direct summation according to their estimated cost. For $t>400$, it selects between $L_{40}(s,\chi)$ and direct summation.
Comparisons with higher-precision implementations of $L(s,\chi
)$ and $L'(s,\chi)$ based on Euler--Maclaurin summation and MPFUN2020 \cite{Bailey_2020} gave typical relative errors of order $10^{-31}$ for $|t|<100$, $10^{-30}$ for $|t|<10^3$, and $10^{-29}$ for $|t|<10^4$.


\end{document}